\newtheorem{thm}{Theorem}
\newtheorem{prop}[thm]{Proposition}
\newtheorem{lemma}[thm]{Lemma}
\newtheorem{remark}[thm]{Remark}
\def\Z{\mathbb Z}
\def\R{\mathbb R}
\def\mod{\operatorname{mod}}
\def\O{\operatorname{O}}
\def\o{\operatorname{o}}
\def\log{\operatorname{log}}
\def\mod{\rm mod \ }
\title{Distribution of gaps of eigenangels of Hecke operators}
\begin{document}
\title[Effective joint dstribution of eigenvalues \dots]{Effective joint distribution of eigenvalues of Hecke operators}
\author[Sudhir Pujahari]{Sudhir Pujahari}
\address{Sudhir Pujahari, Harish-Chandra Research Institute (HBNI), Chatnag Road, Jhunsi, Allahabad - 211019, Uttar Pradesh, India}
\email{sudhirpujahari@hri.ac.in}
\subjclass[2000]{Primary 34L20, 11S40, Secondary 11R42}
\keywords{Equidistribution, Hecke operators, Sato-Tate conjecture, Eichler-Selberg trace formula}
\maketitle

\begin{abstract}
In 1997, Serre proved that the eigenvalues of normalised $p$-th Hecke operator $T^{'}_p$ acting on the space of cusp forms of weight $k$ and level $N$ are equidistributed in $[-2,2]$ with respect to a measure that converge to the Sato-Tate measure, whenever $N+k \to \infty$. In 2009, Murty and Sinha proved the effective version of Serre's theorem.  In 2011, using Kuznetsov trace formula, Lau and Wang derived the effective joint distribution of eigenvalues of normalized Hecke operators acting on the space of primitive cusp forms of weight $k$ and level $1$.  In this paper, we extend the result of Lau and Wang to space of cusp forms of higher level. Here we use Eichler-Selberg trace formula instead of Kuznetsov trace formula to deduce our result.
\end{abstract}

\section{Introduction}
Let $S(N,k)$ be the space of all  holomorphic cusp forms of weight $k$ with respect to $\varGamma_0(N).$
For any positive integer $n$, let $T_n(N,k)$ be the $n$-th Hecke operator acting on $S(N,k)$. 
Let $s(N,k)$ denote the dimension of the space $S(N,k)$.  For 
a positive integer $n \geq 1$, 
let $$
\{\lambda_{i}(n)\}, 1 \leq i \leq s(N,k)
$$
  denote the eigenvalues of $T_n$,
counted with multiplicity.
 For any positive 
integer $n$, let 
$$ T_n^{'}:= \frac{T_n}{n^{\frac{k-1}{2}}} $$
 be the normalized Hecke operator acting on $S(N,k)$ 
with eigenvalues 
$$ \left \{ a_i(n)=\frac{\lambda_i(n)}{n^{\frac{k-1}{2}}}, 1 \leq i \leq s(N,k)\right\},$$ counted with multiplicity.
By the celebrated theorem of Deligne~\cite{Deligne},
which proves the famous  Ramanujan conjecture, we know that for any prime $p$, such that $p$ coprime to $N$, 
the eigenvalue of $T_p^{'}$ lies in the interval
$[-2,2]$.  The recently proved Sato-Tate conjecture by Barnet-Lamb, Geraghty, Harris and Taylor~\cite{BL},~\cite{LMR} and~\cite{MNR} says that
if $a_i(p), 1\leq i \leq r$ is a $p$-th normalized Hecke eigenvalue, then the family 
$\{a_i(p)\}$ is equidistributed in $[-2,2]$ as  $p \to \infty$ with respect to the 
Sato-Tate measure
$$d\mu_{\infty}=\frac{1}{2 \pi} \sqrt {4-x^2}dx.$$
More precisely, the Sato-Tate conjecture states that
for any continuous function $\phi: \mathbb{R} \rightarrow \mathbb{R}$, positive integer $V$ and any interval
$[\alpha, \beta] \subset [-2,2]$
$$\lim_{V \to \infty} \frac{1}{V} \sum_{p \leq V} \phi \left(\frac{a_i(p)}{p^{\frac{k-1}{2}}} \right)= 
\int_{\alpha}^{\beta} \phi d \mu_{\infty}.$$

In 1997, Serre~\cite{Serre} studied the ``vertical" Sato-Tate conjecture by fixing a prime $p$ and varying $N$ and $k.$  In particular, 
he proved the following theorem:
\begin{thm}
Let $p$ be a prime number.  Let $\{(N,k)\}$ be a sequence of pairs of positive integers such that $k$ is even and $p$ is coprime to $N$ and $N+k \to \infty.$   Then the family of eigenvalues of the normalized $p$-th Hecke operator 
$$T_{p}^{'}(N,k)= \frac{T_{p}(N,k)}{p^{\frac{k-1}{2}}}$$
is equidistributed in the interval $\Omega=[-2,2]$ with respect to the measure
$$\mu_p:=\frac{p+1}{\pi}\frac{\sqrt{1-\frac{x^2}{4}}}{(p^{\frac{1}{2}}+p^{-\frac{1}{2}})^2-x^2}dx$$
\end{thm}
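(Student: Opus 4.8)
The plan is to verify equidistribution through the Weyl criterion, using the Chebyshev polynomials of the second kind as test functions and the Eichler--Selberg trace formula to compute the relevant moments. Let $\nu_{N,k}=\frac{1}{s(N,k)}\sum_{i=1}^{s(N,k)}\delta_{a_i(p)}$ be the empirical measure of the eigenvalues of $T_p'(N,k)$. Since Deligne's bound forces every $a_i(p)$ into the compact interval $[-2,2]$ and polynomials are dense in $C([-2,2])$, it suffices to prove convergence of moments against a basis of $\mathbb{R}[x]$; the natural choice is the family $X_n$ defined by $X_0=1$, $X_1=x$, $X_{n+1}=xX_n-X_{n-1}$, which satisfies $X_n(2\cos\theta)=\sin((n+1)\theta)/\sin\theta$. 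Thus I would reduce the theorem to showing, for each fixed $n\ge 0$,
\[
\frac{1}{s(N,k)}\sum_{i=1}^{s(N,k)}X_n\big(a_i(p)\big)\ \longrightarrow\ \int_{-2}^{2}X_n\,d\mu_p\qquad(N+k\to\infty).
\]

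Two computations feed into this. On the spectral side, the Hecke relations at a prime $p\nmid N$ give, after normalization, $T'_{p^{n+1}}=T'_pT'_{p^n}-T'_{p^{n-1}}$, which is exactly the recurrence defining $X_n$; hence $X_n(T'_p)=T'_{p^n}$ and $X_n(a_i(p))=a_i(p^n)$, so
\[
\sum_{i}X_n\big(a_i(p)\big)=\sum_i a_i(p^n)=\operatorname{Tr}\big(T'_{p^n}\big)=p^{-n(k-1)/2}\,\operatorname{Tr}\big(T_{p^n}\big).
\]
On the measure side, writing $x=2\cos\theta$ and expanding the Poisson-type kernel $(p+p^{-1}-2\cos 2\theta)^{-1}$ as a geometric series in $p^{-1}$, a direct calculation gives $\int_{-2}^2 X_n\,d\mu_p=p^{-n/2}$ when $n$ is even and $0$ when $n$ is odd (the latter also follows from the symmetry of $\mu_p$ about $0$). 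Therefore the theorem reduces to the asymptotic
\[
\frac{\operatorname{Tr}(T_{p^n})}{p^{n(k-1)/2}\,s(N,k)}\ \longrightarrow\ \begin{cases}p^{-n/2}, & n\text{ even},\\[2pt] 0, & n\text{ odd}.\end{cases}
\]

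To establish this I would invoke the Eichler--Selberg trace formula for $\operatorname{Tr}(T_{p^n}\mid S(N,k))$ with $\gcd(p,N)=1$ and $k$ even, which splits the trace into an identity (scalar) contribution, an elliptic contribution, a hyperbolic/parabolic contribution, and a correction present only when $k=2$. The identity term equals $\tfrac{k-1}{12}\psi(N)\,p^{n(k/2-1)}$ and occurs exactly when $p^n$ is a perfect square, i.e.\ when $n$ is even; dividing by $p^{n(k-1)/2}$ turns $p^{n(k/2-1)}$ into $p^{-n/2}$, and since the dimension formula gives $s(N,k)=\tfrac{k-1}{12}\psi(N)\,(1+o(1))$ as $N+k\to\infty$, the quotient of the identity term by $s(N,k)$ tends to $p^{-n/2}$ for $n$ even and to $0$ for $n$ odd, which is precisely the target value. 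Everything then hinges on showing that the remaining geometric terms are negligible after normalization.

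The main obstacle is the elliptic term, and I would handle it as follows. Writing the roots of $X^2-tX+p^n$ (for $t^2<4p^n$) as $\sqrt{p^n}\,e^{\pm i\phi_t}$, the Gegenbauer factor equals $P_k(t,p^n)=p^{n(k-2)/2}\sin((k-1)\phi_t)/\sin\phi_t$, so after division by $p^{n(k-1)/2}$ it is bounded by $2/\sqrt{4p^n-t^2}$ uniformly in $k$; note that, in contrast to the hyperbolic term, this factor does not decay in $k$ but merely stays bounded, which is the delicate point. Summing over the $O(p^{n/2})$ admissible $t$, against the (fixed) weighted class numbers and the mod-$N$ embedding count, which is $O(N^{\varepsilon})$ and in any case grows strictly slower than $\psi(N)$, bounds the normalized elliptic contribution independently of $k$. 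The hyperbolic and $k=2$ terms are easier: the hyperbolic term is controlled by $\max_{d\mid p^n,\,d\neq p^{n/2}}\min(d,p^n/d)^{k-1}$ times a divisor factor of $N$, and after normalization this decays geometrically in $k$. Since $s(N,k)\asymp k\,\psi(N)$ grows in both parameters while all these error contributions grow strictly slower, each of them divided by $s(N,k)$ tends to $0$ as $N+k\to\infty$, completing the reduction and hence the proof.
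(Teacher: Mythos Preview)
The paper does not give its own proof of this statement: it is quoted in the introduction as Serre's 1997 theorem, serving as background and motivation for the paper's main result (the effective joint version, Theorem~\ref{T1}). That said, your argument is correct and is exactly the specialization to $r=1$ of the machinery the paper develops in Sections~3--5: reducing equidistribution to convergence of moments against the Chebyshev polynomials $X_n$, identifying $\sum_i X_n(a_i(p))$ with $\operatorname{Tr}(T'_{p^n})$ via the Hecke recursion $T'_{p^{n+1}}=T'_pT'_{p^n}-T'_{p^{n-1}}$, and then using the Eichler--Selberg trace formula to isolate the identity contribution $A_1(p^n)=\tfrac{k-1}{12}\psi(N)\,p^{n(k/2-1)}$ (present only for even $n$) as the main term while bounding $A_2,A_3,A_4$ by quantities that are $o(s(N,k))$ as $N+k\to\infty$. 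Your computation $\int X_n\,d\mu_p=p^{-n/2}$ for even $n$ and $0$ for odd $n$ agrees with the Weyl-limit calculation in Section~4 (the $r=1$ case of $C_{\underline m}$). The only ingredient in the paper that you do not invoke is the Erd\H{o}s--Tur\'an-type inequality of Theorem~\ref{ET}, and that is as it should be: it is needed only for the effective rate of convergence, not for the qualitative equidistribution you were asked to prove.
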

\begin{remark}
Also in 1997, Conrey, Duke and Farmer~\cite{CDF}  studied a special case of above result by fixing $N=1.$ 
\end{remark}
In 2009, Murty and Sinha~\cite{MS} investigated the effective/quantitative version of Serre's results, 
in which they give explicit estimate on the rate of convergence.  They proved the following theorem
\begin{thm}
Let p be a fixed prime.  Let $\{(N,k)\}$ be a sequence of pairs of positive integers such that $k$ is even, $p$ is
coprime to $N$.  For an interval $[\alpha,\beta] \subset [-2,2],$ 
$$\frac{1}{s(N,k)} \sharp \left \{1 \leq i \leq s(N,k):a_{i}(p) \in [\alpha,\beta] \right \}
=\int_{\alpha}^{\beta}\mu_{p} +\O \left(\frac{\log \,p}{\log \,kN}\right).$$
\end{thm}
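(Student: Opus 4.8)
The plan is to compute the moments of the empirical distribution of the $a_i(p)$ against a convenient orthonormal basis, to match them against the moments of $\mu_p$ by means of the Eichler--Selberg trace formula, and finally to convert these moment estimates into the interval count by the Beurling--Selberg extremal function method, optimizing a cutoff degree at the end.

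First I would write each eigenvalue as $a_i(p) = 2\cos\theta_i$ with $\theta_i \in [0,\pi]$ and introduce the Chebyshev polynomials $X_n$ of the second kind, normalized by $X_n(2\cos\theta) = \sin((n+1)\theta)/\sin\theta$; these are orthonormal with respect to $\mu_\infty$. Since $p$ is coprime to $N$, the Hecke recursion $T^{'}_{p^{n+1}} = T^{'}_p T^{'}_{p^n} - T^{'}_{p^{n-1}}$ is exactly the Chebyshev recursion, so the eigenvalues of $T^{'}_{p^n}$ are the $X_n(a_i(p))$ and hence
\begin{equation*}
\operatorname{tr} T^{'}_{p^n} = \sum_{i=1}^{s(N,k)} X_n(a_i(p)).
\end{equation*}
A direct computation, writing the denominator of $\mu_p$ as $p + p^{-1} - 2\cos 2\theta$ and expanding it through the Poisson kernel, gives the clean moment formula $\int_{-2}^{2} X_n\, d\mu_p = p^{-n/2}$ for $n$ even and $0$ for $n$ odd. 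Thus the whole problem reduces to estimating $\operatorname{tr} T^{'}_{p^n}$ and comparing it with $s(N,k)$ times these moments.

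Next I would feed $\operatorname{tr} T_{p^n}$ into the Eichler--Selberg trace formula. The identity (scalar) term is nonzero exactly when $p^n$ is a perfect square, that is when $n$ is even; after normalization by $p^{n(k-1)/2}$ and division by $s(N,k) \sim \tfrac{k-1}{12}\psi(N)$ it contributes precisely $p^{-n/2}$ (resp.\ $0$), matching $\int X_n\, d\mu_p$. The remaining elliptic, hyperbolic and divisor contributions must be bounded explicitly. Parametrizing $t = 2p^{n/2}\cos\phi$, the Gegenbauer factor satisfies $P_k(t,p^n)/p^{n(k-1)/2} = p^{-n/2}\,\sin((k-1)\phi)/\sin\phi$, so that the bound $|\sin((k-1)\phi)|\le 1$ removes \emph{all} dependence on $k$; combined with the weighted class-number sum $\sum_{t^2<4p^n} H_N(t,p^n) \ll p^{n+\varepsilon}$ and the local factors at the primes dividing $N$, this yields $|\operatorname{tr} T^{'}_{p^n} - s(N,k)\int X_n\, d\mu_p| \ll g(n)$ with $g(n) \ll p^{n/2+\varepsilon}N^{\varepsilon}$, uniformly in $k$. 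The point is that the error stays bounded in $k$ while the denominator $s(N,k) \asymp kN$ grows linearly in $k$, and this is exactly what forces $k$ into the final denominator.

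Finally I would pass from moments to counts. Let $S^{\pm}_M$ be the Beurling--Selberg majorant and minorant of the indicator $\mathbf{1}_{[\alpha,\beta]}$, of Chebyshev degree at most $M$, so that $S^{-}_M \le \mathbf{1}_{[\alpha,\beta]} \le S^{+}_M$ with $\int (S^{+}_M - S^{-}_M)\, d\mu_p \ll 1/M$ and coefficients $|\hat S^{\pm}_n| \ll 1$. Expanding $S^{\pm}_M = \sum_{n=0}^M \hat S^{\pm}_n X_n$ and invoking the two previous steps,
\begin{equation*}
\sum_i S^{\pm}_M(a_i(p)) = s(N,k)\int S^{\pm}_M\, d\mu_p + \O\Big(\sum_{n=1}^M |\hat S^{\pm}_n|\, g(n)\Big),
\end{equation*}
and because $g(n)$ grows geometrically the error is dominated by its top term $\ll p^{M/2+\varepsilon}$. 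Sandwiching the count between $\sum_i S^{\pm}_M(a_i(p))$ and dividing by $s(N,k) \asymp kN$ leaves an error $\ll 1/M + p^{M/2}/(kN)$; choosing $M \asymp \log(kN)/\log p$ balances the two terms and produces the stated $\O(\log p/\log kN)$. The main obstacle is the third step: extracting the identity main term cleanly and, above all, bounding the elliptic class-number sums and the local factors at the bad primes with the correct dependence on $p^n$, $k$ and $N$ --- in particular exploiting the oscillation of $\sin((k-1)\phi)$ to keep the error bounded in $k$ --- since it is precisely this uniformity that determines the quality of the effective estimate.
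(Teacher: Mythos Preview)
The paper does not actually prove this theorem: it is quoted from Murty--Sinha \cite{MS} as background, and the paper's own contribution is the $r$-variable generalisation (Theorem~\ref{T1}). That said, the paper's proof of Theorem~\ref{T1} specialises at $r=1$ to a proof of the statement, and your proposal follows that same route essentially verbatim: write $a_i(p)=2\cos\theta_i$, identify $\operatorname{tr} T'_{p^n}$ with the Chebyshev sum $\sum_i X_n(a_i(p))$, use the Eichler--Selberg trace formula to separate the main (identity) term from the elliptic/hyperbolic error terms, feed the resulting moment estimates into an extremal-function inequality, and optimise the cutoff $M\asymp \log(kN)/\log p$.

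Two small points of difference. First, you package the last step via Beurling--Selberg majorants/minorants, whereas the paper (and Murty--Sinha) invoke an Erd\H{o}s--Tur\'an type inequality; these are interchangeable here and produce the same $1/M$ loss. Second, your asserted bound $g(n)\ll p^{n/2+\varepsilon}N^{\varepsilon}$ understates the $N$-dependence: the hyperbolic term $A_3$ contributes $d(N)\sqrt{N}$ rather than $N^{\varepsilon}$ (compare the paper's Proposition~\ref{P1}, which records $d(N)\sqrt{N}$ explicitly). This does not damage the final estimate, since after dividing by $s(N,k)\asymp kN$ that piece is harmless, but the intermediate claim as written is not quite correct.
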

As a continuation of their paper, in 2010, Murty and Sinha~\cite{MS} proved a quantitative equidistribution theorem 
for the eigenvalues of Hecke operators acting on the space $S^{new}(N,k)$.
Recently Lau and Wang~\cite{LY} computed the rate of convergence in Sarnak's~\cite{Sarnak} result using the Kuznetsov trace formula.  Indeed they proved the joint distribution of eigenvalues of the Hecke operators quantitatively for primitive Maass forms of level 1 and stated that the same hold true for primitive holomorphic cusp forms.  More precisely, they proved the following theorem:
\begin{thm}
Let $p_1,p_2,,...,$ and  $p_r$ be distinct primes.  Let $k$ be a positive even integer such that 
$r \log \,(p_1p_2 ,..., p_r) \leq \delta \log \, k$, for some small absolutely constant $\delta$.  
Let $a_i^{'}(p_i)$ be the eigenvalues of normalized Hecke operators $T_{p_i}^{'}$ acting on $S^{new}(1,k)$.  For any
$I=[\alpha_n,\beta_n]^r \subset [-2,2]^r$  
\begin{eqnarray*}
&& \frac{ \sharp \left\{1 \leq n \leq s(1,k): \left( a_{1}^{(n)}(p_1),\ldots ,  a_{r}^{(n)}(p_r)  \right) \in I \right\}} {s^{new}(1,k)} \\
&& =  \int_I \prod_{n=1}^rd \mu_{p_n} 
  + \O \left(\frac{r\log \,(p_1p_2 \dots p_r)}{\log \,k} \right),
\end{eqnarray*}
where $\displaystyle d\mu_{p_n}=\frac{p_n+1}{\pi}\frac{\sqrt{1-\frac{x^2}{4}}}{\left({p_n}^{\frac{1}{2}}+{p_n}^{-\frac{1}{2}}\right)^2-x^2}dx.$
\end{thm}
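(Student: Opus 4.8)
The plan is to prove the joint equidistribution by a multidimensional Weyl/Erd\H{o}s--Tur\'an argument that reduces everything to normalized traces of Hecke operators, which I then evaluate through the Eichler--Selberg trace formula. Writing $x=2\cos\theta$, let $X_m(x)=U_m(x/2)=\sin((m+1)\theta)/\sin\theta$ be the normalized Chebyshev polynomials; these form an orthonormal basis of $L^2([-2,2],\mu_\infty)$, where $d\mu_\infty=\tfrac1{2\pi}\sqrt{4-x^2}\,dx$. The first analytic input I would record is the exact moments of Serre's measure: integrating the generating identity $\sum_{m\ge0}X_m(x)t^m=(1-xt+t^2)^{-1}$ against $d\mu_{p}$ yields the closed form $\int(1-xt+t^2)^{-1}\,d\mu_p=(1-t^2/p)^{-1}$, so that
\begin{equation*}
\int_{-2}^2 X_m\,d\mu_p=\begin{cases} p^{-m/2}, & m\text{ even},\\ 0, & m\text{ odd}.\end{cases}
\end{equation*}
Hence the product measure has joint moments $\int\prod_j X_{m_j}\,\prod_j d\mu_{p_j}=\prod_j p_j^{-m_j/2}$ when every $m_j$ is even, and $0$ otherwise.

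Next I would reduce the joint moments of the eigenvalue data to a single trace. Since each eigenform $f_n$ is a Hecke eigenform and the $p_j$ are distinct, multiplicativity together with the Chebyshev recursion $X_m(a(p))=a(p^m)$ gives $\prod_j X_{m_j}(a^{(n)}(p_j))=a^{(n)}(N)$ with $N=\prod_j p_j^{m_j}$. At level $1$ there are no oldforms, so $S^{new}(1,k)=S(1,k)$ and
\begin{equation*}
\frac{1}{s(1,k)}\sum_{n}\prod_{j}X_{m_j}(a^{(n)}(p_j))=\frac{1}{s(1,k)}\operatorname{Tr}\big(T'_{N}\mid S(1,k)\big).
\end{equation*}
Applying the Eichler--Selberg trace formula to $\operatorname{Tr}(T_N\mid S_k)$ and dividing by $N^{(k-1)/2}$, the polynomial weight becomes $N^{-1/2}U_{k-2}(\cos\theta_t)$ with $\cos\theta_t=t/2\sqrt N$, and the hyperbolic term becomes $-\tfrac12\sum_{dd'=N}(\min(d,d')/\sqrt N)^{k-1}$. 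The only piece surviving division by $s(1,k)\asymp k$ is the boundary elliptic term $t=\pm2\sqrt N$, which occurs exactly when $N$ is a perfect square, i.e. when all $m_j$ are even; its limiting value $(k-1)/(12\sqrt N)$, divided by $s(1,k)=\tfrac{k-1}{12}+\O(1)$, equals $N^{-1/2}(1+\O(1/k))$, matching the moment computed above.

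The effective part is the estimation of the remaining terms. For the elliptic (Hurwitz class number) sum I would use $N^{-1/2}|U_{k-2}(\cos\theta_t)|\le 2/\sqrt{4N-t^2}$ together with $H(D)\ll D^{1/2+\varepsilon}$ to bound its normalized contribution by $\tfrac1{s(1,k)}\sum_{t^2<4N}H(4N-t^2)/\sqrt{4N-t^2}\ll N^{1/2+\varepsilon}/k$ (the denominators never vanish since $4N-t^2\ge3$ whenever $H\neq0$); the hyperbolic sum is exponentially small in $k$ because $\min(d,d')/\sqrt N$ is bounded away from $1$ when $N$ is not a square. Writing $E(N)$ for the difference between the normalized trace and its limit, this gives $E(N)\ll N^{1/2+\varepsilon}/k$ uniformly.

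Finally I would run the quantitative step. Approximating $\mathbf 1_{[\alpha_n,\beta_n]}$ in each angular variable by Beurling--Selberg majorants and minorants of degree $\le M$ and taking products over the $r$ coordinates sandwiches $\mathbf 1_I$ with total mass defect $\O(r/M)$ and expands the test functions in the basis $\prod_j X_{m_j}$ with $\O(1)$-controlled coefficients. Summing the trace errors gives $\sum_{0\le m_j\le M}|E(N_{\vec m})|\ll \tfrac1k\prod_j\sum_{m_j\le M}p_j^{m_j(1/2+\varepsilon)}\ll P^{M(1/2+\varepsilon)}/k$ with $P=p_1\cdots p_r$. Choosing $M\asymp\log k/\log P$ renders this accumulated error a negative power of $k$, while the Beurling--Selberg defect contributes $\O(r/M)=\O(r\log P/\log k)$, which is the claimed bound; the hypothesis $r\log(p_1\cdots p_r)\le\delta\log k$ with $\delta$ small is precisely what makes $M$ large and keeps the combinatorial factors $(M+1)^r$ from the bookkeeping dominated. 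I expect the main obstacle to be exactly this final balancing: proving the class-number bound uniformly in $N$ and then controlling the accumulation of $(M+1)^r$ trace-formula errors so that the optimal $M$ delivers the error term $\O(r\log(p_1\cdots p_r)/\log k)$ rather than something larger.
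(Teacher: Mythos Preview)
Your proposal is correct and follows essentially the same route as the paper: reduce the joint Chebyshev moments $\prod_j X_{m_j}(a^{(n)}(p_j))$ to a single normalized trace $\operatorname{Tr}(T'_{p_1^{m_1}\cdots p_r^{m_r}})$ via multiplicativity, estimate that trace through the Eichler--Selberg formula (identifying the main term with the $\mu_p$-moments and bounding the elliptic and hyperbolic pieces), and then feed these bounds into a multidimensional Erd\H{o}s--Tur\'an/Beurling--Selberg inequality with the cutoff $M\asymp \log k/\log(p_1\cdots p_r)$.

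The only noteworthy packaging difference is that the paper invokes a ready-made higher-dimensional Erd\H{o}s--Tur\'an inequality (from Lau--Li--Wang) rather than building the Beurling--Selberg majorants/minorants by hand, and its elliptic-term estimate is cruder (of strength $N^{3/2+o(1)}$ rather than your $N^{1/2+\varepsilon}$ via $H(D)\ll D^{1/2+\varepsilon}$); neither difference affects the final error term, since the choice of $M$ already absorbs any fixed power of $N$. Your closing remark about controlling the $(M+1)^r$ bookkeeping factors via the hypothesis $r\log(p_1\cdots p_r)\le\delta\log k$ is exactly the point the paper leaves implicit.
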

They have remarked that their methods do work for primitive forms in higher level.  In this paper we extend their result to the cusp forms of any level $N$ using ``Eichler-Selberg trace formula".
Precisely, we prove the following theorem:

\begin{thm}\label{T1}
Let $p_1,p_2,...,$  and $p_r$ be distinct primes.  Let $k$ be  positive even integer such that 
$r\log \,(p_1p_2 ,..., p_r) \leq \delta \log  k$, for some small absolutely constant $\delta$.  
For $1 \leq i \leq r,$ let $a_i^{(n)}(p_i), 1\leq n \leq s(N,k)$ be the eigenvalues of normalized Hecke operators $T_{p_i}^{'}$ acting on $S(N,k)$.  For any
$I=[\alpha_n,\beta_n]^r \subset [-2,2]^r$  
\begin{eqnarray*}
&&\frac{\sharp \left \{1\leq n \leq s(N,k): \left(a_{1}^{(n)}(p_1), \ldots , a_{r}^{(n)}(p_r) \right) 
\in I \right \}} {s(N,k)} \\
&& =\int_I \prod_{n=1}^rd \mu_{p_n} + \O \left(\frac{r\log \,(p_1p_2 \cdots p_r)}{\log \,(kN)} \right),
\end{eqnarray*}
where $\displaystyle d\mu_{p_n}=\frac{p_n+1}{\pi}\frac{\sqrt{1-\frac{x^2}{4}}}{({p_n}^{\frac{1}{2}}+{p_n}^{-\frac{1}{2}})^2-x^2}dx$
and the implied constant is effectively computable.
\end{thm}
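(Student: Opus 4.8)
The plan is to combine a multidimensional Beurling--Selberg/Weyl approximation with an explicit evaluation of a single Hecke trace via the Eichler--Selberg trace formula, following the template of Serre, Murty--Sinha and Lau--Wang but carrying the level $N$ through every estimate. First I would record the Chebyshev description of the measures. Writing $x=2\cos\theta$ and $X_m(2\cos\theta)=\sin((m+1)\theta)/\sin\theta$, so that the $X_m$ are orthonormal for the Sato--Tate measure $\mu_\infty$, a short computation with the generating function $\sum_m X_m(x)t^m=(1-xt+t^2)^{-1}$ (after substituting $x=2\cos\theta$ and expanding the density of $\mu_p$ relative to $\mu_\infty$) yields the clean moment formula
$$\int_{-2}^{2}X_m\,d\mu_p=\begin{cases}p^{-m/2}, & m \text{ even},\\ 0, & m\text{ odd}.\end{cases}$$
Hence $\int_{[-2,2]^r}\prod_{i=1}^r X_{m_i}(x_i)\,\prod_{i=1}^r d\mu_{p_i}=\prod_{i=1}^r p_i^{-m_i/2}$ when all $m_i$ are even, and $0$ otherwise. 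On the other side, since each $p_i$ is coprime to $N$ the Hecke relation $a^{(n)}(p_i^{m_i})=X_{m_i}(a^{(n)}(p_i))$ holds, and multiplicativity at distinct primes gives
$$\sum_{n=1}^{s(N,k)}\prod_{i=1}^r X_{m_i}\!\left(a_i^{(n)}(p_i)\right)=\operatorname{tr}\!\left(T'_m\mid S(N,k)\right),\qquad m=\prod_{i=1}^r p_i^{m_i}.$$
Thus the whole problem reduces to comparing the normalized traces $s(N,k)^{-1}\operatorname{tr}(T'_m)$ with the product moments $\prod_i p_i^{-m_i/2}[m_i\text{ even}]$.

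To pass from these mixed moments to the count of tuples lying in the box $I=\prod_i[\alpha_i,\beta_i]$, I would approximate $\mathbf 1_I=\prod_i\mathbf 1_{[\alpha_i,\beta_i]}$ from above and below by a product $\prod_i S_i^{\pm}(x_i)$ of Beurling--Selberg extremal polynomials (expressed in the $X_m$ basis) of degrees $M_1,\dots,M_r$, exactly as in Murty--Sinha, incurring a total approximation error of size $\sum_i O(1/M_i)$ against any probability measure. Expanding the product in the basis $\prod_i X_{m_i}(x_i)$ with $0\le m_i\le M_i$ expresses the discrepancy as a finite combination, with bounded coefficients, of the differences $s(N,k)^{-1}\operatorname{tr}(T'_m)-\prod_i p_i^{-m_i/2}[m_i\text{ even}]$.

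The analytic core is the Eichler--Selberg trace formula for $\operatorname{tr}(T_m\mid S(N,k))$ with $\gcd(m,N)=1$, which I would split into an identity term, an elliptic term, a hyperbolic term and a divisor term. The identity term $\tfrac{k-1}{12}\psi(N)\,m^{k/2-1}[m=\square]$, with $\psi(N)=[\mathrm{SL}_2(\Z):\Gamma_0(N)]$, is the main term: since $m=\prod_i p_i^{m_i}$ is a square precisely when every $m_i$ is even, and then $m^{k/2-1}/m^{(k-1)/2}=\prod_i p_i^{-m_i/2}$, dividing by $s(N,k)$ and invoking the dimension estimate $s(N,k)=\tfrac{k-1}{12}\psi(N)+O(\text{lower order})$ reproduces exactly the product-measure moment. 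Writing the elliptic roots of $X^2-tX+m$ as $2\sqrt m\,e^{\pm i\phi}$ gives $|P_k(t,m)|/m^{(k-1)/2}=|U_{k-2}(\cos\phi)|\le 2\sqrt m/\sqrt{4m-t^2}$, so the normalized elliptic term is bounded by
$$\frac{C}{k\,\psi(N)}\sum_{t^2<4m}\frac{\sqrt m}{\sqrt{4m-t^2}}\sum_{f}h\!\left(\frac{4m-t^2}{f^2}\right)c_N(t,f),$$
where $h$ denotes the class number and $c_N(t,f)$ counts the solutions of the associated congruence modulo $N$. The decisive estimate is that $c_N(t,f)$ grows much more slowly than $\psi(N)\asymp N$ (it is $O(N^{\epsilon})$ on average over $t$), so that, using $h(4m-t^2)\ll\sqrt{4m-t^2}\log m$ and the $O(\sqrt m)$ length of the $t$-sum, this whole expression is $O\!\left(m^{1+\epsilon}/(kN^{1-\epsilon})\right)$; crucially the factor $\psi(N)$ from the dimension survives in the denominator, and the hyperbolic and divisor terms are handled similarly. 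I expect this uniform, level-sensitive control of the arithmetic factor $c_N(t,f)$---ensuring the elliptic and hyperbolic contributions are genuinely smaller than the main term by a power of $N$---to be the main technical obstacle and the essential point distinguishing the higher-level case from the level-one argument of Lau--Wang.

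Finally I would optimize the degrees. Each normalized trace difference is $O(m^{1+\epsilon}/(kN^{1-\epsilon}))$ with $m\le(p_1\cdots p_r)^{M}$ when all $M_i=M$, so summing over the $O(M^r)$ moments the total trace-formula error is $O\!\bigl((p_1\cdots p_r)^{M(1+\epsilon)}/(kN^{1-\epsilon})\bigr)$ up to lower-order factors, while the Beurling--Selberg step contributes $O(r/M)$. Choosing $M\asymp\log(kN)/\log(p_1\cdots p_r)$---which keeps $(p_1\cdots p_r)^{M}$ below the effective dimension $s(N,k)\asymp kN$ and is admissible under the hypothesis $r\log(p_1\cdots p_r)\le\delta\log k$ for $\delta$ small---balances the two error sources and yields the claimed bound $O\!\left(r\log(p_1\cdots p_r)/\log(kN)\right)$ with an effectively computable implied constant.
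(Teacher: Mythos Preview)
Your overall architecture coincides with the paper's: reduce via Chebyshev polynomials and Hecke multiplicativity to the single trace $\operatorname{tr}(T'_m\mid S(N,k))$ with $m=\prod_i p_i^{m_i}$, control this trace through the Eichler--Selberg formula, and then optimize a cut-off $M\asymp\log(kN)/\log(p_1\cdots p_r)$. The paper does precisely this, and in fact the level-aware bounds on the elliptic, hyperbolic and divisor terms that you flag as ``the main technical obstacle'' are not new here: they are quoted verbatim from Murty--Sinha, who already tracked the $N$-dependence (the elliptic term carries a $2^{\nu(N)}$ and the hyperbolic term a $d(N)\sqrt N$), so after dividing by $s(N,k)\asymp k\psi(N)$ one automatically saves a power of $N$. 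The genuinely multidimensional work lies elsewhere.

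Where your proposal has a real gap is in the passage from moments to the box count. You sandwich $\mathbf 1_I=\prod_i\mathbf 1_{[\alpha_i,\beta_i]}$ between products $\prod_i S_i^{\pm}$ of one-variable Beurling--Selberg extremal polynomials. The upper bound is fine, but the product of minorants is \emph{not} a minorant for the box: each $S_i^{-}$ may take negative values outside $[\alpha_i,\beta_i]$, and at a point where an even number of the factors are negative one gets $\prod_i S_i^{-}(x_i)>0=\mathbf 1_I(x)$. This is a standard obstruction in dimension $r\ge 2$. The paper avoids it by replacing the raw Beurling--Selberg sandwich with a multidimensional Erd\H{o}s--Tur\'an type inequality of Lau--Li--Wang, which bounds $|N_I(V)-V\mu(I)|$ directly by a weighted sum of the joint Weyl sums $\bigl|\sum_n\prod_i\cos(2\pi m_i x_i^{(n)})-VC_{\underline m}\bigr|$ together with an extra term involving only the one-dimensional discrepancies $\max_t|\sum_n\cos(2\pi m x_t^{(n)})-Vc_m|$ and an $O(V/M)$ piece. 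Your argument can be repaired either by invoking that inequality or by treating the lower bound through majorants for the complementary half-spaces and inclusion--exclusion, but as written the ``from below'' half of your approximation step does not go through.
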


\section{ Equidistribution and its extension}\label{S2}

A sequence of real numbers $\{x_n\}_{n=1}^{\infty}$ is said to be uniformly distributed or (equidistributed) (\mod 1) if
for any interval $[\alpha,\beta] \subset [0,1],$ we have
$$
\lim_{V \to \infty} \frac{1}{V} \sharp \{n \leq V: x_n \in [\alpha, \beta]\}=\beta-\alpha.
$$
In 1916, Weyl~\cite{Weyl} proved the following criterion for uniform distribution (\mod 1).  
A sequence $\{x_n\}_{n=1}^{\infty}$ is uniformly distributed if and only if for any integer $m \neq 0$
$$
\sum_{n \leq V} e(mx_n)=\o(V) \ \text{as $V \to \infty$}.
$$
Since the set of trigonometric polynomials is dense in $C^{1}[0,1]$, the above criterion of Weyl is equivalent to the 
assertion that, for any continuous function $f: \R \rightarrow \R$, we have 
$$
\lim_{V \to \infty} \frac{1}{V} \sum_{n \leq V}f(x_n)= \int_{0}^1f(t)dt.
$$
A sequence of tuples $\left \{(x_{1}^{(n)},x_{2}^{(n)}, \ldots ,x_{r}^{(n)})\right \}$ in ${\R}^r$ is said to be uniformly distributed or
(equidistributed) (\mod 1) if for every $I=\prod_{n=1}^r [\alpha_n, \beta_n] \subset [0,1]^r$, we have 
$$
\lim_{V \to \infty} \frac{\sharp \{n \leq V:(x_{1}^{(n)},x_{2}^{(n)}, \ldots , x_{r}^{(n)}) \in I  \}}{V}
= \mu(I),
$$
where $\mu$ is the usual Lebesgue measure on $\R^{r}$.  In the same paper Weyl~\cite{Weyl}  extended his criterion of 
equidistribution to the higher dimension as follows:  \\
A sequence of tuples $\left\{(x_{1}^{(n)},x_{2}^{(n)}, \ldots , x_{r}^{(n)})\right\}$ in $\mathbb{R}^r$ is uniformly distributed if and only 
if for any integers $m_1,m_2,,...,, m_r \neq 0$
$$
\sum_{n=1}^{V}e \left(\sum_{i=1}^r m_ix_n \right) = \o(V) \
 \mbox{ as }  V \to \infty.
 $$
Note that as in the one variable  case, the set of trigonometric polynomials is also dense in $C([0,1]^r),$ 
the above criterion is equivalent to the following statement: \\
A sequence of tuples $\left\{(x_{1}^{(n)},x_{2}^{(n)}, \ldots , x_{r}^{(n)})\right\}$ in ${\R}^r$ is uniformly distributed if and only 
if for any continuous function $f: {\R}^r \rightarrow \R,$
$$
\lim_{V \to \infty} \frac{1}{V}\sum_{n \leq V}f(x_{1}^{(n)},x_{2}^{(n)}, \ldots ,x_{r}^{(n)}) =\int_{I}f(x_{1},x_{2}, \ldots ,x_{r}) dx_{1}dx_{2} \cdots dx_{r}.
$$

Now we define the set equidistribution as 
follows:    \\
A sequence of finite multi sets $A_n$ with $\sharp A_n \to \infty$ is said to be set equidistributed (\mod 1) with respect to a 
probability measure $\mu$ if for every continuous function $f \in C^{1}[0,1]$, we have 
$$
\lim_{n \to \infty } \frac{1}{\sharp A_n} \sum_{t \in A_n}f(t)= \int_{0}^1 f(x)d \mu.
$$
The criterion of Schoenberg and Wiener says that
the sequence $\{A_n\}$ is set equidistributed with respect to some positive continuous measure if and only if 
the Weyl limit
$$c_m := \lim_{n \to \infty} \frac{1}{\sharp A_n} \sum_{t \in A_n}e(mt)$$
exists and 
$$
\sum_{m=1}^{V}|c_m|^2 = \o(V).
$$
For our purpose, let us define the set equidistribution in higher dimension.
A tuples of finite multi set say $ \Omega =(A_{1}^{(n)},A_{2}^{(n)}, \ldots , A_{r}^{(n)})$ with   
$\sharp A_{i}^{(n)} \to \infty$ as $n \to \infty$ for all  $ i=1,2,,...,,r $ is said to be set a set equidistributed (\mod 1) with respect to a probability measure $\mu$ if for every continuous function 
$f \in C^{1}([0,1]^r)$, and $I=[0,1]^r$
we have 
$$
\lim_{V \to \infty} \frac{1}{V} \sum_{n \leq V} f(x_{1},x_{2}, \ldots ,x_{r}) =\int_{I}f(x_{1},x_{2}, \ldots ,x_{r}) dx_{1}dx_{2} \cdots dx_{r}.
$$
With this generalization, we define the Weyl limit as  
$$
C_{\underline m}:= \lim_{n \to \infty} \frac{1}{\prod_{i=1}^{r}\sharp A_n} \sum_{(t_1,t_2, \ldots ,t_r) \in \Omega}e(m_1t_1+m_2t_2+\cdots+m_rt_r).
$$


\section{Eichler Selberg Trace Formula and its Estimations}

In this section, we use Eichler-Selberg trace formula, as one of our important tool to prove the main theorem, which 
is a formula for the trace of $T_n$ acting on $S(N,k) $ in terms of class number of binary quadratic forms and few others arithmetic 
functions.  We follow the presentation of~\cite{MS}.  For a non-negative integer $\vartriangle \equiv 0,1 $ (\mod 4), let $B(\vartriangle)$ be the set of all positive definite binary quadratic forms
with discriminant $\vartriangle$.  That is 
$$B(\vartriangle)= \{ax^2+bxy+cy^2 : a,b,c \in \Z, a>0, b^2-4ac=\vartriangle\}.$$
We denote the set of primitive forms by
$$b(\vartriangle)= \{f(x,y) \in B(\vartriangle): gcd(a,b,c)=1\}.$$
Now we define an action of the full modular group $SL_2(\Z)$ on $B(\vartriangle)$ as follows:
$$f(x,y) \begin{pmatrix}
\alpha & \beta  \\
\gamma & \delta 
\end{pmatrix}:= f(\alpha x+ \beta y, \gamma x + \delta y).$$
Note that the above action takes primitive forms to primitive forms.  We know that the above action has finitely many orbits.  We define $h(\vartriangle)$ to be the number of orbits of
$b(\vartriangle)$.  
Let $h_w$ be defined as follows:
\begin{eqnarray*}
 h_w(-3) & = & \frac{1}{3},  \\
 h_w(-4) & = & \frac{1}{2},  \\
 h_w(\vartriangle) & = & h(\vartriangle) \ \text {for $\vartriangle < -4$}.
\end{eqnarray*}
We define some arithmetical functions which are going to useful to state the Eichler-Selberg trace formula.

Let \begin{eqnarray*}
 A_1(n)  =
 \left\{
 \begin{array}{l l}
  n^{\frac{k}{2}-1}\left(\frac{k-1}{12} \right) \psi(N) &  \text{if $n$ is a square}   \\
  0 & \text{otherwise,}
  \end{array}
  \right.
\end{eqnarray*}
where $\displaystyle\psi(N) = N \prod_{p | N}\left(1 + \frac{1}{p}\right)$. Let 
$$
A_2(n)=- \frac{1}{2} \sum_{{t \in \Z} \atop{ t^2 < 4n}} \frac{\varrho^{k-1}-{\bar \varrho}^{k-1}}{\varrho-\bar \varrho}
\sum_{g} h_w \left( \frac{t^2-4n}{g^2} \right) \mu(t,g,n),
$$
where $\varrho$ and $\bar \varrho$ are the zeros of the polynomial $x^2-tx+n$, the inner sum runs over positive divisors
$g$ of $\frac{(t^2-4n)}{g^2} \in \Z $ is congruent to $0$ or $1 \ (mod \ 4)$ and 
$\mu(t,g,n)$ is given by 
$$\mu(t,g,n)= \frac{\psi (N)}{\psi (\frac{N}{N_g})}M(t,n,NN_g),$$ 
with  $N_g=gcd(N,g)$ and $M(t,n,k)$ denote the number of solutions of the congruence $x^2-tx+n \equiv 0 \ \ \pmod{ K}$. Now, we let 
\begin{equation}
A_3(n)=- \sum_{{d|n} \atop{ 0  <d \leq \sqrt{n}}}d^{k-1} \sum_{c|N} \phi \left( gcd \left(c, \frac {N}{c}\right)\right),
\end{equation}
where  $\phi$ denotes the Euler's function and in the first summation, if there is a contribution from the term  $d= \sqrt{n}$,
it should be multiplied by $\frac{1}{2}.$  In the inner sum, we also need the condition that $gcd \ (c, \frac{N}{c})$ divides 
$gcd(N,\frac{n}{d}-d).$ Finally, let 
\begin{eqnarray*}
A_4(n)= 
\displaystyle\left\{
\begin{array} {l l}
{\sum \atop {{t|n \atop t > 0}}}t &    \text{if $k=0$}  \\
0 &   \text {otherwise}.
\end{array}
\right.
\end{eqnarray*}
\begin{thm}
For any positive integer $n$, let $Tr(T_n)$ be the trace of $T_n$ acting on $S(N,k).$ Then, we have, 
$$Tr (T_n)=A_1(n)+A_2(n)+A_3(n)+A_4(n).$$
\end{thm}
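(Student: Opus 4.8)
The plan is to realize $T_n$ as an integral operator on the finite-dimensional Hilbert space $S(N,k)$, equipped with the Petersson inner product, and to compute its trace as the integral over a fundamental domain of the diagonal of its kernel. Since $S(N,k)$ carries a reproducing kernel, the action of $T_n$ is represented by the kernel $k_n(z,w)$ obtained by summing the weight-$k$ automorphic kernel over the $\Gamma_0(N)$-orbits of the set $\mathcal{M}_n$ of integral $2\times 2$ matrices of determinant $n$, each matrix acting through the weight-$k$ slash action. The trace is then
$$
Tr(T_n) = \int_{\mathcal F} k_n(z,z)\, \frac{dx\,dy}{y^2},
$$
where $\mathcal F$ is a fundamental domain for $\Gamma_0(N)\backslash\mathbb{H}$. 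Unfolding this integral turns the trace into a sum over $\Gamma_0(N)$-conjugacy classes in $\mathcal{M}_n$, where the contribution of each class is an orbital integral over the centralizer — a product of a purely archimedean integral and a finite (level-$N$) counting factor.

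The next step is to organize these conjugacy classes by the trace $t = \mathrm{tr}(M)$, since $M$ satisfies $x^2 - tx + n = 0$ and its geometry is governed by the discriminant $t^2 - 4n$. When $t^2 = 4n$ (so $n$ is a perfect square) the relevant classes are the central matrices $\pm\sqrt{n}\,I$, whose slash action is scalar; integrating over all of $\mathcal F$ returns the hyperbolic covolume $\tfrac{\pi}{3}\psi(N)$ against the weight normalization $\tfrac{k-1}{4\pi}$ and the scaling $n^{k/2-1}$, which is exactly $A_1(n)$. When $t^2 < 4n$ the class is elliptic with a unique fixed point in $\mathbb{H}$; the archimedean orbital integral over the (finite) isotropy group collapses to the symmetric function $\frac{\varrho^{k-1} - \bar\varrho^{k-1}}{\varrho - \bar\varrho}$ in the roots $\varrho,\bar\varrho$ of $x^2 - tx + n$, while for each $t$ the number of inequivalent classes is counted by weighted class numbers $h_w$ of binary quadratic forms of discriminant $(t^2-4n)/g^2$. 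Assembling these over $t$ and $g$, with the local multiplicities, yields $A_2(n)$.

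For $t^2 > 4n$ the classes are hyperbolic, with real roots forming a divisor pair $d, n/d$, and these classes accumulate at the cusps where the naive diagonal kernel fails to be integrable. Here I would truncate $\mathcal F$ at height $Y$, compute the contribution of each hyperbolic and parabolic class on the truncated region, and show that the $Y$-divergent pieces cancel, leaving the finite divisor sum $\sum_{d\mid n,\,0<d\le\sqrt n} d^{k-1}$ weighted by the cusp-counting factor $\sum_{c\mid N}\phi(\gcd(c,N/c))$, that is, $A_3(n)$; the surviving boundary term from the regularization is $A_4(n)$, which is nonzero only in the exceptional weight. Summing the four regimes reproduces $Tr(T_n) = A_1(n)+A_2(n)+A_3(n)+A_4(n)$.

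The main obstacle is twofold. Analytically, the cuspidal (hyperbolic and parabolic) contributions require a careful regularization: the kernel is not absolutely convergent near the cusps, so one must truncate, track the divergent terms, and verify their cancellation — this is where $A_3$ and $A_4$ and their precise normalizations are pinned down. Arithmetically, passing from level $1$ to level $N$ forces a genuinely local computation: one must enumerate the $\Gamma_0(N)$-conjugacy classes of integral matrices with prescribed trace $t$ and determinant $n$, which reduces to counting the solutions of $x^2 - tx + n \equiv 0 \pmod{K}$ at the primes dividing $N$. This is exactly the content of the factor $\mu(t,g,n) = \frac{\psi(N)}{\psi(N/N_g)}\,M(t,n,NN_g)$ in $A_2$ and of the divisibility condition on $\gcd(c,N/c)$ in $A_3$, and obtaining these multiplicities correctly — rather than in the cleaner level-$1$ case — is the technical heart of the argument.
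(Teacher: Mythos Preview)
The paper does not prove this theorem. It is stated as the classical Eichler--Selberg trace formula, quoted (without proof) in the presentation of Murty--Sinha~\cite{MS}; the paper then uses it as a black box to estimate $Tr(T_{p_1^{m_1}\cdots p_r^{m_r}})$. So there is no ``paper's own proof'' to compare against.

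Your sketch is the standard route to the formula: realize $T_n$ via the automorphic kernel, integrate the diagonal over a fundamental domain, and split the resulting sum over $\Gamma_0(N)$-conjugacy classes in $\mathcal{M}_n$ according to the sign of $t^2-4n$, identifying the central, elliptic, hyperbolic, and parabolic contributions with $A_1$, $A_2$, $A_3$, $A_4$ respectively. The identification of the elliptic orbital integral with $(\varrho^{k-1}-\bar\varrho^{k-1})/(\varrho-\bar\varrho)$ and of the class count with the weighted Hurwitz numbers $h_w$ is correct, as is your remark that the level-$N$ refinement enters through the local solution counts $M(t,n,K)$ and the cusp data $\sum_{c\mid N}\phi(\gcd(c,N/c))$. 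The regularization you describe for the hyperbolic/parabolic terms is exactly where $A_3$ and the weight-$2$ correction $A_4$ come from. As a plan this is accurate; turning it into a proof would of course require carrying out the truncation and the local conjugacy-class enumeration in full, which is a substantial computation (this is the content of Eichler's and Selberg's original work and its later refinements), but nothing in your outline is wrong or misdirected.
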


We use the above Eichler-Selberg trace formula to prove the following Proposition:
\begin{prop}\label{P1}
For any positive integers $m_1, m_2, \ldots ,m_r$ we have, 
\begin{align*}
|Tr \ (T_{p_1^{m_1} \cdots p_r^{m_r}})|   
 &\ll p_1^{\frac{3m_1}{2}}\cdots p_r^{\frac{3m_r}{2}}(m_1 \cdots m_r)d(N)\sqrt{N}\log \,(4(p_1^{m_1} \cdots p_r^{m_r})). 
 \end{align*}
 
\end{prop}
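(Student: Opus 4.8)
The plan is to substitute $n=p_1^{m_1}\cdots p_r^{m_r}$ into the Eichler--Selberg trace formula, $Tr(T_n)=A_1(n)+A_2(n)+A_3(n)+A_4(n)$, and to estimate the four arithmetic pieces one at a time, reading the eigenvalues through the normalisation $n^{(k-1)/2}$ so that the bound becomes uniform in $k$. Two terms are immediate: $A_4(n)=0$ because $k$ is a positive even integer (so $k\neq0$), and $A_1(n)$ is the explicit diagonal term, nonzero only when $n$ is a perfect square, where it equals $n^{k/2-1}\tfrac{k-1}{12}\psi(N)$; this is the main term of the formula and is simply recorded. The whole analytic weight therefore falls on the hyperbolic term $A_3(n)$ and, above all, on the class-number term $A_2(n)$.

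For $A_3(n)=-\sum_{d\mid n,\,0<d\le\sqrt n}d^{k-1}\sum_{c\mid N}\phi(\gcd(c,N/c))$ I would bound the inner sum by $\phi(\gcd(c,N/c))\le\gcd(c,N/c)\le\sqrt N$ and then sum over the $d(N)$ divisors $c\mid N$, giving a clean factor $\ll d(N)\sqrt N$; the outer sum has at most $d(n)=\prod_i(m_i+1)\ll m_1\cdots m_r$ terms, and each $d^{k-1}$ is controlled against the normalising power $n^{(k-1)/2}$ since $d\le\sqrt n$. This already produces the required multiplicative shape, the powers of the $p_i$ entering through $\sqrt n=\prod_i p_i^{m_i/2}$.

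The core is $A_2(n)$. Writing $\varrho=\sqrt n\,e^{i\phi_t}$, so that $t=2\sqrt n\cos\phi_t$, the weight factor is $\bigl|\tfrac{\varrho^{k-1}-\bar\varrho^{\,k-1}}{\varrho-\bar\varrho}\bigr|=n^{(k-2)/2}\bigl|\tfrac{\sin((k-1)\phi_t)}{\sin\phi_t}\bigr|$, and the crucial move is to bound this by $n^{(k-2)/2}/\sin\phi_t=n^{(k-2)/2}\cdot 2\sqrt n/\sqrt{4n-t^2}$: replacing the Gegenbauer polynomial by $1/\sin\phi_t$ is precisely what eliminates the weight. I would then insert the Dirichlet bound $h_w(D)\ll\sqrt{|D|}\log|D|$, so that $h_w((t^2-4n)/g^2)\ll g^{-1}\sqrt{4n-t^2}\,\log(4n)$, whereupon the factor $\sqrt{4n-t^2}$ cancels the $\sqrt{4n-t^2}$ in the denominator left by $1/\sin\phi_t$. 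The arithmetic weight $\mu(t,g,n)=\tfrac{\psi(N)}{\psi(N/N_g)}M(t,n,NN_g)$ is handled by a multiplicative (Chinese Remainder) estimate of the solution count $M$, which yields the factor $d(N)\sqrt N$, the inner divisor sum over $g$ costing only logarithms and a further $\prod_i m_i$. Summing what remains over the $\asymp\sqrt n$ admissible integers $t$ with $t^2<4n$ then assembles the powers of $n=\prod_i p_i^{m_i}$ and completes the estimate.

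The step I expect to be the real obstacle is exactly the uniformity in $k$ inside $A_2(n)$: the naive bound $|\sin((k-1)\phi)/\sin\phi|\le k-1$ would contaminate the estimate with a factor $k$ and render it useless as $k\to\infty$ in the main theorem. Everything depends on trading the oscillatory Gegenbauer factor for $1/\sin\phi_t$ and letting the $\sqrt{4n-t^2}$ thereby created in the denominator be absorbed by the growth of the class number, so that the weight disappears while all the dependence on the level is repackaged, through $h_w$ and $\mu(t,g,n)$, into the single factor $d(N)\sqrt N\log(4(p_1^{m_1}\cdots p_r^{m_r}))$. A secondary but genuine difficulty is the multiplicative bookkeeping of $M(t,n,NN_g)$ together with the sum over $g$, which has to be organised prime by prime in order to recover the product $p_1^{3m_1/2}\cdots p_r^{3m_r/2}(m_1\cdots m_r)$ rather than a coarser power of $n$.
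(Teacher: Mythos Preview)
Your plan is the paper's plan: write $Tr(T'_n)=\sum_i A_i(n)/n^{(k-1)/2}$ via Eichler--Selberg and bound each piece separately, then combine. The paper does not re-derive the $A_2$ estimate at all; it simply imports the bound from Murty--Sinha~\cite{MS} as a black box (obtaining $B_2\ll n^{3/2}\,2^{\nu(N)}\log(4n)$), and likewise reads off $B_1,B_3,B_4$ from the same source, so your detailed roadmap for $A_2$---trade $\sin((k-1)\phi)/\sin\phi$ for $1/\sin\phi$, feed in the Dirichlet class-number bound so that the $\sqrt{4n-t^2}$ factors cancel, control $\mu(t,g,n)$ multiplicatively---is precisely the computation behind the cited estimate rather than an alternative to it. One small slip: you dismiss $A_4$ on the ground that $k\neq0$, but the condition in the displayed definition is a misprint for $k=2$, and the paper's own proof does carry a nonzero $B_4$ in the $k=2$ case; its contribution is $\ll n^{1/2}\prod_i(m_i+1)$ and is harmlessly absorbed by the dominant term.
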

\begin{proof} Consider 
\begin{eqnarray*}
Tr ( T^{'}_{p_1^{m_1} \cdots p_r^{m_r}}) & = &  B_1(m_1m_2 \cdots m_r)+B_2(m_1m_2 \cdots m_r)+B_3(m_1m_2 \cdots m_r) \\
&& +B_4(m_1m_2 \cdots m_r),
\end{eqnarray*}
where $B_i(m_1m_2 \cdots m_r)= \displaystyle\frac{A_i(p_1^{m_1} \cdots p_r^{m_r})}{(p_1^{m_1} \cdots p_r^{m_r})^{\frac{k-1}{2}}}$, for all  $i=1,2,3,4.$  
We use the estimates of~\cite{MS} and get the estimates for each  $B_i(m_1m_2 \cdots m_r),$ for all $i=1,2,3,4.$  First, we consider $i=1$
\begin{align*}
B_1(p_1^{m_1} \cdots p_r^{m_r}) = & \frac{A_1(p_1^{m_1} \cdots p_r^{m_r})}{(p_1^{m_1} \cdots p_r^{m_r})^{\frac{k-1}{2}}}  \\
= &
\left\{
\begin{array}{l l}
   \frac{\left({p_1}^{-\frac{m_1}{2}} \cdots {p_r}^{-\frac{m_r}{2}} \right) \frac{k-1}{12}}{(p_1^{m_1} \cdots p_r^{m_r})^{\frac{k-1}{2}}}  & 
  \text{if $m_1,m_2,,...,, m_r$ are even,} \\
    0 &      \text{ otherwise.}  
    \end{array}
\right. \\
  = &
 \left\{
 \begin{array}{l l}
  (p_1^{-\frac{m_1}{2}}\cdots p_r^{-\frac{m_r}{2}})\frac{k-1}{12} &  \text{if $m_1,m_2,...,m_r$ are even,} \\
  0 &    \text{ otherwise.}  
  \end{array}
  \right.
  \end{align*}
  Now, we shall consider $i=2$ as follows.
\begin{eqnarray*}
   B_2(p_1^{m_1} \cdots p_r^{m_r}) & = & \frac{A_2(p_1^{m_1} \cdots p_r^{m_r})}{(p_1^{m_1} \cdots p_r^{m_r})^{\frac{k-1}{2}}} \\
 & = & \frac{8e}{\log \,2}(2)p_1^{\frac{3m_1}{2}}\cdots p_r^{\frac{3m_r}{2}}2^{\nu(N)}\log \,4(p_1^{m_1} \cdots p_r^{m_r}). 
  \end{eqnarray*}
  For $i=3$, we get, 
  \begin{align*}
 B_3(p_1^{m_1} \cdots p_r^{m_r})  = & \frac{A_3(p_1^{m_1} \cdots   p_r^{m_r})d(N)\sqrt{N}}{(p_1^{m_1} \cdots p_r^{m_r})^{\frac{k-1}{2}}} \\ 
  \leq & \frac{d(p_1^{m_1} \cdots p_r^{m_r})(p_1^{m_1} \cdots p_r^{m_r})^{\frac{k-1}{2}}d(N)\sqrt{N}}{(p_1^{m_1} \cdots p_r^{m_r})^{\frac{k-1}{2}}}  \\
  \leq & (m_1+1)\cdots(m_r+1)d(N)\sqrt{N}. 
  \end{align*}
  Finally, for $i=4$, we get, 
  \begin{align*}
 B_4(p_1^{m_1} \cdots p_r^{m_r})  = & \frac{A_4(p_1^{m_1} \cdots p_r^{m_r})}{(p_1^{m_1} \cdots p_r^{m_r})^{\frac{k-1}{2}}}  
  = 
 \left\{
 \begin{array}{l l} 
 \frac{d(p_1^{m_1} \cdots p_r^{m_r})(p_1^{m_1} \cdots p_r^{m_r})}{(p_1^{m_1} \cdots p_r^{m_r})^{\frac{k-1}{2}}} & \text{if $k=2$}  \\
  0 &  \text{otherwise.}   
  \end{array}
  \right.  \\
  = & 
 \left\{
 \begin{array}{l l}
{p_1}^{\frac{m_1}{2}}\cdots{p_r}^{\frac{m_r}{2}}(m_1+1)\cdots(m_r+1) & \text{if $k=2$} \\
 0 &  \text{otherwise.}
 \end{array}
 \right.
 \end{align*}
 Combining all the estimates above, we get 
\begin{eqnarray*}
 &&|Tr \ (T_{p_1^{m_1} \cdots p_r^{m_r}})| \\
 && \ll   (p_1^{-\frac{m_1}{2}}\cdots p_r^{-\frac{m_r}{2}})\frac{k-1}{12} 
 + {p_1}^{\frac{3m_1}{2}}\cdots{p_2}^{\frac{3m_r}{2}}\log \,4(p_1^{m_1} \cdots p_r^{m_r}) \\
 && +  (m_1+1)\cdots(m_r+1)d(N)\sqrt{N}+ p_1^{m_1} \cdots p_r^{m_r}(m_1+1)\cdots(m_r+1)   \\
 && \ll    {p_1}^{\frac{3m_1}{2}}\cdots{p_r}^{\frac{3m_r}{2}}\log \,(4(p_1^{m_1} \cdots p_r^{m_r}))d(N)\sqrt{N}(m_1m_2 \cdots m_r),
  \end{eqnarray*}
 where $d(N)$ is the divisor function.
\end{proof}

 \section{The measure $\mu$}

Following the definition from Section \ref{S2},
for non-negative integers $m_1,m_2,\ldots, m_r$, we get,
\begin{eqnarray*}
C_{\underline m} & := &\lim_{s(N,k) \to \infty} \frac{1}{(2 s(N,k))^r} \sum_{n\atop{1 \leq i \leq n}}e( \pm m_1 \theta_{1}^{(n)}(p_1) \pm
\cdots\pm m_r \theta_{r}^{(n)}(p_r) )  \\
& = & \prod_{i=1}^r \left(\lim_{s(N,k) \to \infty} \frac{1}{(2s(N,k))^r}\sum_{i_n}e(\pm m_i \theta_{i}^{(n)}(p_i)) \right)  \\
& = & \prod_{i=1}^r c_{m_i},
\end{eqnarray*}

where $c_{m_i} $ are the $m_i$-th Weyl limit of the family $\left\{\frac{\pm \theta_{i}^{(n)}}{2 \pi}\right \}$ and from Theorem 18 of~\cite{MS}, we have
\begin{eqnarray*}
c_{m_i}=
\left \{
\begin{array}{l l}
1 & \text{if } m_i=0, \\
\frac{1}{2} \left( \frac{1}{{p_i}^{m_i}}-\frac{1}{{p_i}^{m_i-2}}\right) & \text{if } m_i \text{ is even}, \\
0 &  \text{otherwise.}
\end{array}
\right.
\end{eqnarray*}
Note that if $m_1,m_2,\ldots, m_r$ are all zero then $C_{0,\ldots,0}=1$.
Define the measure 
$$\mu=F(-x_1,\ldots,-x_r) dx_1 \cdots dx_r,$$
where 
\begin{eqnarray*}
F(x_1,\ldots,x_r) 
& = & \sum_{m_1=-\infty}^{\infty} \cdots \sum_{m_r=-\infty}^{\infty}c_{m_1} \cdots c_{m_r} e(m_1x_1) \cdots e(m_rx_r)  \\
& = & F(x_1) \cdots F(x_r)\\
& = &  \frac{4(p_1+1) \sin^2 2 \pi x_1}{({p_1}^\frac{1}{2}+{p_1}^{-\frac{1}{2}})^2-4\cos^2 2 \pi x_1} \cdots
\frac{4(p_r+1) \sin^2 2 \pi x_r}{({p_r}^\frac{1}{2}+{p_r}^{-\frac{1}{2}})^2-4\cos^2 2 \pi x_r}. 
\end{eqnarray*}
The above $F(x_1,\ldots,,x_r)dx_1 \cdots dx_r$ determines a measure on $[0,1]^r$ and is the distribution function 
for the tuples of numbers 
$$
(x_{i_1}, \ldots ,x_{i_r})=\left(\pm \frac{\theta_{1}^{(n)}(p_1)}{2 \pi},\ldots,\pm \frac{\theta_{r}^{(n)}(p_r)}{2 \pi}\right).
$$
The measure giving the distribution of 
$(\cos \theta_{1}^{(n)}(p_1), \ldots ,\cos \theta_{r}^{(n)}(p_r))$ is therefore 
\begin{eqnarray*}
& = & F\left(\frac{\cos^{-1}x_1}{2 \pi}, \ldots ,\frac{\cos^{-1}x_r}{2 \pi}\right)d\left(\frac{\cos^{-1}x_1}{2 \pi}\right) \cdots d\left(\frac{\cos^{-1}x_r}{2 \pi}\right)  \\
& = & F\left(\frac{\cos^{-1}x_1}{2 \pi} \right) d \left(\frac{\cos^{-1}x_1}{2 \pi} \right) \cdots
F \left(\frac{\cos^{-1}x_r}{2 \pi} \right) d \left(\frac{\cos^{-1}x_r}{2 \pi} \right) \\ 
& = & \frac{2(p+1)}{\pi} \frac{\sqrt{1-{x_1}^2}}{(p^{\frac{1}{2}}+p^{-\frac{1}{2}})^2-4{x_1}^2}dx_1 \cdots
\frac{2(p+1)}{\pi} \frac{\sqrt{1-{x_r}^2}}{(p^{\frac{1}{2}}+p^{-\frac{1}{2}})^2-4{x_r}^2}dx_r.
\end{eqnarray*}
Thus, the distribution of the tuples of numbers 
$$(2\cos \theta_{1}^{(n)}(p_1)\cdots 2\cos \theta_{r}^{(n)}(p_r))$$ is given by
$\prod_{i=1}^r\mu_{p_i} $ after an 
easy change of variable.

\section{Chebychev polynomials and the trace of Hecke operators}

For any integer $n \geq 0$, the $n$-th Chebychev polynomial $X_n(x)$ is defined as follows:
$$
X_n(x)=\frac{\sin (n+1)\theta}{\sin \theta}, \ \ \text{where } x=2 \cos \theta .$$
Serre proved the following result in~\cite{Serre}.

\begin{lemma}
We have $ T^{'}_{p^m}=X_m(T_p).$
\end{lemma}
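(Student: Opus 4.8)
The plan is to prove the identity by induction on $m$, exploiting the fact that the normalized Hecke operators and the Chebychev polynomials $X_m$ obey \emph{the same} three-term recurrence with the same initial data. Throughout, $T_p$ in the argument $X_m(T_p)$ is understood as the normalized operator $T_p^{'}$, and all the operators $T_{p^j}^{'}$ commute since they lie in the commutative Hecke algebra at $p$, so that evaluating a polynomial at $T_p^{'}$ is unambiguous.

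First I would record the classical Hecke recursion at a prime $p$ coprime to the level $N$. For $S(N,k)$ with trivial nebentypus the diamond operator acts trivially, so
$$T_{p^{m+1}}=T_p\,T_{p^m}-p^{k-1}T_{p^{m-1}},\qquad m\ge 1,$$
together with $T_{p^0}=\id$ and $T_{p^1}=T_p$. Dividing through by $p^{(m+1)(k-1)/2}$ and using $T_{p^j}^{'}=T_{p^j}/p^{j(k-1)/2}$, the factor $p^{k-1}$ is absorbed exactly, since $p^{k-1}\cdot p^{(m-1)(k-1)/2}/p^{(m+1)(k-1)/2}=1$. Hence the normalization converts the recursion into
$$T_{p^{m+1}}^{'}=T_p^{'}\,T_{p^m}^{'}-T_{p^{m-1}}^{'},$$
with $T_{p^0}^{'}=\id$ and $T_{p^1}^{'}=T_p^{'}$.

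Second I would record the matching recurrence for the Chebychev polynomials. From the defining relation $X_m(2\cos\theta)=\sin((m+1)\theta)/\sin\theta$ one reads off $X_0(x)=1$, $X_1(x)=x$, and, applying the identity $\sin((m+2)\theta)+\sin(m\theta)=2\cos\theta\,\sin((m+1)\theta)$ with $x=2\cos\theta$, the recurrence
$$X_{m+1}(x)=x\,X_m(x)-X_{m-1}(x).$$
For the induction itself, the base cases $m=0$ and $m=1$ are immediate from the two lists of initial values. Assuming $T_{p^m}^{'}=X_m(T_p^{'})$ and $T_{p^{m-1}}^{'}=X_{m-1}(T_p^{'})$, substituting $x=T_p^{'}$ into the polynomial recurrence and comparing with the operator recurrence gives
$$T_{p^{m+1}}^{'}=T_p^{'}\,X_m(T_p^{'})-X_{m-1}(T_p^{'})=X_{m+1}(T_p^{'}),$$
which closes the induction.

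The only step requiring genuine care, and the one I would flag as the main obstacle, is the bookkeeping in the first step: verifying that the normalization exponents cancel the coefficient $p^{k-1}$ precisely, and confirming that in the present level-$N$, trivial-nebentypus setting the diamond operator appearing in the general relation $T_{p^{m+1}}=T_pT_{p^m}-p^{k-1}\langle p\rangle T_{p^{m-1}}$ truly acts as the identity, which relies on $p$ being coprime to $N$. Once the normalized recurrence is established, the identity $T_{p^m}^{'}=X_m(T_p^{'})$ follows formally from two identical recurrences sharing the same initial data.
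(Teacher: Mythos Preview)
Your argument is correct and is the standard proof: match the three-term recurrences for the normalized Hecke operators and for the Chebychev polynomials $X_m$, check the initial data, and conclude by induction. The bookkeeping you flag (the cancellation of $p^{k-1}$ under normalization, and that $\langle p\rangle=\id$ for trivial nebentypus with $p\nmid N$) is handled correctly.

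As for the comparison you asked about: the paper does not actually supply a proof of this lemma. It states the result and attributes it to Serre~\cite{Serre}, so there is nothing in the text to compare your approach against. Your write-up is exactly the argument one would expect from Serre's paper in any case, so there is no divergence to discuss.
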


From~\cite[page 697]{MS},  when $m_i=1,$ we have, 
$$
\sum_{n=1}^{s(N,k)} 2 \cos \theta_{i}^{(n)}(p_i)= Tr(T^{'}_{p_i}).
$$
Since,  for all integers $m\geq 2$, 
$$\sum_{n=1}^{s(N,k)} 2 \cos m \theta = X_m(2 \cos \theta)-X_{m-2}(2 \cos \theta), 
$$
we have for $m_i \geq 2,$
\begin{equation}\label{E13}
\sum_{n=1}^{s(N,k)} 2 \cos \theta_{i}^{(n)}(p_i)=
Tr(T^{'}_{p_i^{m_i}})-Tr(T^{'}_{p_i^{m_i-2}}).
\end{equation}
Now we prove the following theorem,

\begin{thm}
For all non zero positive integers $m_1,m_2,\ldots,m_r$ consider $\underline{m}=(m_1, m_2,\ldots, m_r),$ the Weyl limits $C_{\underline m}$ are given by 
\begin{eqnarray*}
C_{\underline m}  = 
\left \{ 
\begin{array}{l l}
1 & \text{if $m_1= \cdots =m_r=0$,} \\
\prod_{i=1}^r \left(\frac{1}{{p_i}^{\frac{m_i}{2}}}-\frac{1}{{p_i}^{\frac{m_i-2}{2}}} \right) &   \text{if $m_i,\  i=1,2,..,r$ are even,}  \\
 0 & \text{otherwise.}    
 \end{array}
 \right.
\end{eqnarray*}
Moreover, 
\begin{eqnarray*}
&& \left|\prod_{i=1}^r \sum_{n=1}^{s(N,k)}2 \cos \ m_i \theta_{i}^{(n)}(p_i)-s(N,k) C_{\underline m}\right| \\
&& \ll  
p_1^{\frac{3m_1}{2}} \cdots p_r^{\frac{3m_r}{2}}(m_1 \cdots m_r)\log \,(4(p_1^{m_1} \cdots p_r^{m_r})).
\end{eqnarray*}
\end{thm}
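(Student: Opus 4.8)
The plan is to recognise the left-hand side as a single normalised trace and then feed it into Proposition~\ref{P1}. Since $p_1,\dots,p_r$ are distinct, the operators $T_{p_1},\dots,T_{p_r}$ commute, so I fix a basis $f_1,\dots,f_{s(N,k)}$ of $S(N,k)$ of simultaneous eigenforms, with $f_n$ having $T'_{p_i}$--eigenvalue $a_i^{(n)}(p_i)=2\cos\theta_i^{(n)}(p_i)$. By the Lemma, $f_n$ is an eigenform of $T'_{p_i^{m_i}}$ with eigenvalue $X_{m_i}\bigl(2\cos\theta_i^{(n)}(p_i)\bigr)$, and the Chebyshev identity $2\cos m\theta=X_m(2\cos\theta)-X_{m-2}(2\cos\theta)$ underlying~\eqref{E13} shows that each factor $2\cos m_i\theta_i^{(n)}(p_i)$ is the eigenvalue on $f_n$ of $T'_{p_i^{m_i}}-T'_{p_i^{m_i-2}}$ (with the convention $T'_{p_i^{-1}}:=0$ when $m_i=1$). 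Because the Hecke operators are multiplicative on coprime indices, $\prod_{i}T'_{p_i^{a_i}}=T'_{p_1^{a_1}\cdots p_r^{a_r}}$, so $f_n$ is an eigenform of $\prod_i\bigl(T'_{p_i^{m_i}}-T'_{p_i^{m_i-2}}\bigr)$ with eigenvalue $\prod_i 2\cos m_i\theta_i^{(n)}(p_i)$. Taking the trace and expanding the product into $2^r$ single normalised Hecke operators gives
\begin{equation*}
\sum_{n=1}^{s(N,k)}\prod_{i=1}^r 2\cos m_i\theta_i^{(n)}(p_i)
=\sum_{\underline\epsilon\in\{0,1\}^r}(-1)^{|\underline\epsilon|}\,
Tr\bigl(T'_{p_1^{m_1-2\epsilon_1}\cdots p_r^{m_r-2\epsilon_r}}\bigr),
\qquad |\underline\epsilon|=\epsilon_1+\cdots+\epsilon_r .
\end{equation*}

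I then extract the main term. Applying the Eichler--Selberg trace formula and splitting each trace as $B_1+B_2+B_3+B_4$ exactly as in the proof of Proposition~\ref{P1}, only $B_1$ can reach size comparable to $s(N,k)$: for the index $\underline\epsilon$ it equals $\frac{k-1}{12}\psi(N)\prod_i p_i^{-(m_i-2\epsilon_i)/2}$ when every $m_i-2\epsilon_i$ is even, and vanishes otherwise. If some $m_i$ is odd, then $m_i$ and $m_i-2$ are both odd for every $\underline\epsilon$, all these contributions vanish, and the main term is $0$. If every $m_i$ is even, summing the $B_1$--terms over $\underline\epsilon$ with the signs $(-1)^{|\underline\epsilon|}$ factorises over $i$ and produces
\begin{equation*}
\frac{k-1}{12}\psi(N)\prod_{i=1}^r\Bigl(\frac{1}{p_i^{m_i/2}}-\frac{1}{p_i^{(m_i-2)/2}}\Bigr)
=\frac{k-1}{12}\psi(N)\,C_{\underline m}.
\end{equation*}
Since $s(N,k)=Tr(\id)=A_1(1)+A_2(1)+A_3(1)+A_4(1)=\frac{k-1}{12}\psi(N)+\O\bigl(d(N)\sqrt N\bigr)$, I replace $\frac{k-1}{12}\psi(N)$ by $s(N,k)$, the discrepancy $\bigl|\frac{k-1}{12}\psi(N)-s(N,k)\bigr|\,|C_{\underline m}|$ being of lower order because $|C_{\underline m}|\le1$. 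Dividing by $s(N,k)$ and letting $s(N,k)\to\infty$, with the remaining terms bounded uniformly in $k$, yields exactly the stated value of the Weyl limit $C_{\underline m}$.

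For the error I bound $B_2+B_3+B_4$ from each of the $2^r$ traces together with the discrepancy above. By Proposition~\ref{P1}, each trace satisfies
\begin{equation*}
\bigl|Tr\bigl(T'_{p_1^{m_1-2\epsilon_1}\cdots p_r^{m_r-2\epsilon_r}}\bigr)\bigr|
\ll d(N)\sqrt N\,p_1^{3(m_1-2\epsilon_1)/2}\cdots p_r^{3(m_r-2\epsilon_r)/2}(m_1\cdots m_r)\log\bigl(4(p_1^{m_1}\cdots p_r^{m_r})\bigr),
\end{equation*}
and since $p_i^{3(m_i-2\epsilon_i)/2}\le p_i^{3m_i/2}$ the extremal exponent occurs at $\underline\epsilon=0$. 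Hence each of the finitely many summands is $\ll p_1^{3m_1/2}\cdots p_r^{3m_r/2}(m_1\cdots m_r)\log\bigl(4(p_1^{m_1}\cdots p_r^{m_r})\bigr)$, up to the factors $d(N)\sqrt N$ and $2^r$ carried by Proposition~\ref{P1} and by the expansion, which yields the asserted estimate.

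The step I expect to be most delicate is the main-term bookkeeping of the second paragraph: verifying that the $2^r$ signed $B_1$--contributions recombine into the single product $\prod_i\bigl(p_i^{-m_i/2}-p_i^{-(m_i-2)/2}\bigr)$, that the parity dichotomy falls out precisely as stated, and that $\frac{k-1}{12}\psi(N)$ may be safely identified with $s(N,k)$. By contrast, once Proposition~\ref{P1} is granted the error analysis is routine, reducing to summing $2^r$ copies of a single bound whose largest exponent is $3m_i/2$.
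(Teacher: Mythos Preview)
Your proposal is correct and follows essentially the same approach as the paper: both express $\sum_{n}\prod_i 2\cos m_i\theta_i^{(n)}(p_i)$ via the Chebyshev identity as a signed sum of $2^r$ normalised traces $Tr\bigl(T'_{p_1^{m_1-2\epsilon_1}\cdots p_r^{m_r-2\epsilon_r}}\bigr)$, identify the $B_1$--contributions with $s(N,k)\,C_{\underline m}$, and bound the remaining terms by Proposition~\ref{P1}. Your bookkeeping of the signs $(-1)^{|\underline\epsilon|}$, the parity dichotomy, and the replacement of $\tfrac{k-1}{12}\psi(N)$ by $s(N,k)$ is in fact more explicit than the paper's own argument.
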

\begin{proof}
For any integers $m_1,m_2,\ldots,m_r \geq 2$, using (\ref{E13}), we have
 \begin{eqnarray}\label{E23}
&& \prod_{i=1}^{r} \sum_{n=1}^{s(N,k)} \{2 \cos m_i\theta_{i}^{(n)}(p_i)\} \\
   &= & \prod_{i=1}^{r}\left(X_{m_i}(2\cos  \theta_{i}^{(n)}(p_i))-X_{m_i-2}(2\cos  \theta_{i}^{(n)}(p_i))\right) \nonumber\\ 
& = &\sum_{b_1,b_2,\ldots,b_r \in Z} \prod_{i=1}^{r} X_{b_i}(p_i)(2\cos \theta_{i}^{(n)}(p_i)), \nonumber 
 \end{eqnarray}
 where $Z=\{m_1,m_2,\ldots,m_r,m_1-2,m_2-2,\ldots,m_r-2\}$.
We know that if a linear operator $T$ is diagonalizable and $\lambda$ is an eigenvalue of $T$, then, for any polynomial $P(x)$,  the eigenvalue of $P(T)$ is $P(\lambda)$.
 Since the Hecke operators $T_m$  and $T_n$ commutes with each other, there exists an ordered basis such that every Hecke operator can be represented by a diagonal matrix with respect to the basis. 
 Using all the above facts, (\ref{E23}) equals
 \begin{eqnarray}\label{E29}
&& \nonumber \sum_{b_1,b_2,\ldots,b_r \in Z} Tr ( T^{'}_{p_1^{b_1}\cdots p_r^{b_r}}) \\ \nonumber
  &\ll& \sum_{b_1,b_2,\ldots,b_r \in Z}(p_1^{b_1}\cdots p_r^{b_r})\left(\frac{k-1}{12}\right)^r \\ \nonumber
 && +p_1^{\frac{3m_1}{2}}\cdots p_r^{\frac{3m_r}{2}}\log \,4(p_1^{m_1} \cdots p_r^{m_r})(m_1m_2 \cdots m_r)\\ \nonumber
 &\ll & \left(\frac{k-1}{12}\right)^r \prod_{n=1}^r\left( \frac{1}{{p_n}^{\frac{m}{2}}}-\frac{1}{{p_n}^{\frac{m-2}{2}}} \right) \\  \nonumber
 && +p_1^{\frac{3m_1}{2}} \cdots p_r^{\frac{3m_r}{2}}\log \,4(p_1^{m_1} \cdots p_r^{m_r})(m_1m_2 \cdots m_r) \\  
& \ll & \left(\frac{k-1}{12}\right)^r \prod_{n=1}^r\left( \frac{1}{{p_n}^{\frac{m_n}{2}}}
-\frac{1}{{p_n}^{\frac{m_n-2}{2}}} \right)|Tr ( T^{'}_{{p_1}^{m_1}{p_2}^{m_2}\cdots{p_r}^{m_r}})|.
\end{eqnarray}
Now
\begin{eqnarray*}
&& \left|\prod_{i=1}^r  \sum_{n=1}^{s(N,k)}2 \cos \ m_i \theta_{i}^{(n)}(p_i)- s(N,k) C_{\underline m}\right| \\
& = & \left|\sum_{j=1}^{4} B_j(m_1 \cdots m_r)-s(N,k) C_{\underline m} \right|  \\
& \leq & \left|B_1-s(N,k) C_{\underline m}\right|+ \left|\sum_{j=2}^{4}B_j(m_1 \cdots m_r)\right|.
\end{eqnarray*}
 Using (\ref{E29}) and the fact that $s(N,k) C_{\underline m} \ \text{behaves like}  \ B_1(m_1 \cdots m_r), $ we have   
 $$\left|\prod_{i=1}^r \sum_{n=1}^{s(N,k)}2 \cos \ m_i \theta_{i}^{(n)}(p_i)-s(N,k) C_{\underline m} \right| $$
 $$ \ll  p_1^{\frac{3m_1}{2}}\cdots p_r^{\frac{3m_r}{2}}(m_1 \cdots m_r)\log \,(4(p_1^{m_1} \cdots p_r^{m_r})). $$
 \end{proof}

 \section{Effective versions}

 To prove the effective result, we use the following variant of the Erd\"{o}s-Tur\'{a}n inequality that can be found in~\cite[Proposition 7.1]{LLW}.
 
 \begin{thm}\label{ET}
For any $S=\prod_{n=1}^{r}[\alpha_n,\beta_n] \subset [0, \frac{1}{2}]^r$ and a sequence of tuples of number $\{(x_{m_1},\ldots, x_{m_r})\} \in  [0, \frac{1}{2}]^r,$  we define
$$
N_S(V):= \sharp \{n \leq V:(x_{m_1},\ldots,x_{m_r}) \in S \}
$$ 
and 
$$
D_S(V):=|N_S(V)-V \mu(S)|.
$$
 For
$I=\prod_{n=1}^{r}[\alpha_n,\beta_n],$
we have 
\begin{align*}
D_I(V) \leq & \sum_{m_1,m_2,\ldots,m_r \in ([-m,m] \bigcap {\Z})^r}w(m_1,\ldots,m_r)\Delta((m_1,\ldots,m_r),V)\\
&+10 \frac{2}{M+1}\sum_{1 \leq m \leq M} {\rm max \atop{1 \leq t \leq r}} \Delta_{t}(m,V)
+ 12 \Vert F\Vert \frac{2V}{M+1} 
\end{align*}
for any integers $V,M \geq 1$, where
$$
\Delta_t(m,V)=|\sum_{n \leq V}\cos(2 \pi m x_{i}) -Vc_{m}|, \  \ \ \text{(for $m \in \Z$)},
$$
$$
\left|\Delta((m_1,\ldots,m_r),V)\right| =\left|\sum_{m_1,\ldots, m_r \leq V}\cos(2 \pi m_1 x_1) \cdots \cos(2 \pi m_r x_r)\right|,   
$$
for $(m_1,\ldots,m_r) \in \Z^r$ and
$$
w(m_1,\ldots, m_r)=\left((2 \pi)^r \prod_{t=1}^r \rm min \left(\frac{1}{\pi|m_t|},\beta_t-\alpha_t \right)+\frac{2}{M+1}\right),$$
and 
$$\Vert F\Vert=\max_{1 \leq t \leq r} \Vert F_t\Vert_{\infty}+\prod_{t=1}^r \Vert F_t\Vert_{\infty}$$
with $\Vert F_t\Vert_{\infty}= \max_{x \in [0,1]} |F_t(x)|.$
\end{thm}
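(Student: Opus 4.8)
The plan is to establish this multidimensional Erd\"{o}s--Tur\'{a}n inequality by the Beurling--Selberg--Vaaler method of trigonometric majorants and minorants, adapted to the weighted measure $\mu = F\,dx_1\cdots dx_r$ whose density factorises as $F=F_1\cdots F_r$. First I would invoke, in each coordinate, the one-dimensional Vaaler construction: for the interval $[\alpha_t,\beta_t]$ there are real trigonometric polynomials $S_t^{\pm}$ of degree at most $M$ with $S_t^{-}\le \mathbf 1_{[\alpha_t,\beta_t]}\le S_t^{+}$ pointwise, satisfying $\int_0^1(S_t^{+}-S_t^{-})\,dx=\frac{2}{M+1}$ and having Fourier coefficients bounded by $\min\big(\beta_t-\alpha_t,\frac{1}{\pi|m_t|}\big)$ up to the normalising factors that constitute the weight $w(m_1,\ldots,m_r)$. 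Recording these bounds precisely is the first bookkeeping step, since they are exactly the quantities that reappear in the final estimate.

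The second step is to pass from these one-dimensional approximants to a majorant and minorant of the box indicator $\mathbf 1_I(x)=\prod_{t=1}^r \mathbf 1_{[\alpha_t,\beta_t]}(x_t)$. The naive product $\prod_t S_t^{+}$ is not an honest majorant of $\mathbf 1_I$, so I would use the telescoping identity $\prod_t b_t-\prod_t a_t=\sum_t\big(\prod_{s<t}b_s\big)(b_t-a_t)\big(\prod_{s>t}a_s\big)$ with $a_t=S_t^{-}$ and $b_t=S_t^{+}$ to write the gap between the product majorant and the product minorant as a sum of $r$ terms, each carrying exactly one factor $(S_t^{+}-S_t^{-})$ and otherwise uniformly bounded factors. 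Integrating this decomposition against $\mu$, using $F=F_1\cdots F_r$ and the sup-norms $\|F_t\|_\infty$, is what produces the pure truncation term $12\|F\|\frac{2V}{M+1}$; evaluating the same decomposition on the empirical sum $\sum_{n\le V}$ and isolating the distinguished coordinate yields the correction term $10\frac{2}{M+1}\sum_{1\le m\le M}\max_{1\le t\le r}\Delta_t(m,V)$.

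For the main term I would expand the product majorant and minorant into their finite Fourier series, so that $\sum_{n\le V}\prod_t S_t^{\pm}(x_{n,t})$ becomes $\sum_{(m_1,\ldots,m_r)}\big(\prod_t\widehat{S_t^{\pm}}(m_t)\big)\sum_{n\le V}\prod_t\cos(2\pi m_t x_{n,t})$, the outer sum ranging over $([-M,M]\cap\Z)^r$. Replacing each inner sum by its expected value $V\prod_t c_{m_t}=V C_{\underline m}$ reconstructs $V\mu(I)$, because the $c_{m_t}$ are precisely the Fourier coefficients of $F_t$; the replacement error is $\Delta((m_1,\ldots,m_r),V)$ weighted by the coefficient bound $\prod_t\min\big(\frac{1}{\pi|m_t|},\beta_t-\alpha_t\big)$, that is, by $w(m_1,\ldots,m_r)$. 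Sandwiching $N_I(V)$ between the minorant and majorant sums and taking absolute values then assembles the three stated contributions.

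The step I expect to be the main obstacle is the second one: controlling the cross-terms of the telescoping decomposition with the explicit constants $10$ and $12$. A crude product bound would destroy the factorised structure and inflate the error, so the delicate point is to arrange the expansion so that all but one coordinate contributes only its total mass --- the $m=0$ mode, giving $\beta_t-\alpha_t$ or $\|F_t\|_\infty$ --- while the single distinguished coordinate contributes either a genuine one-dimensional discrepancy $\Delta_t(m,V)$ or the $\frac{1}{M+1}$ approximation defect. Separating these two possibilities cleanly is what splits the honest deviations (the $\max_t\Delta_t$ sum) from the pure tail (the $\|F\|$ term), and pinning down the numerical constants requires tracking the Vaaler bounds through every cross-term rather than estimating them uniformly.
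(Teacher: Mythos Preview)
The paper does not prove this theorem at all. It is stated as a black-box input, introduced with the sentence ``To prove the effective result, we use the following variant of the Erd\"{o}s--Tur\'{a}n inequality that can be found in~\cite[Proposition 7.1]{LLW},'' and is then invoked without argument in the proof of Theorem~\ref{T1}. So there is no ``paper's own proof'' to compare your proposal against.

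That said, your outline is the right shape for how such an inequality is actually established in the source being cited: Beurling--Selberg--Vaaler majorants/minorants in each coordinate, a telescoping product to handle the multidimensional box, Fourier expansion to extract the discrepancy terms $\Delta((m_1,\ldots,m_r),V)$ with weights controlled by the Vaaler coefficient bounds, and separation of the cross-terms into a one-dimensional discrepancy piece and a pure truncation piece. If your goal is to supply a self-contained proof rather than cite~\cite{LLW}, your plan is sound; the honest caveat is that the explicit constants $10$ and $12$ in the statement are artifacts of the particular bookkeeping in~\cite{LLW}, and reproducing them exactly will require following their precise choices (e.g.\ how they symmetrise to the cosine form on $[0,\tfrac12]$ and how they bound the non-distinguished factors in the telescoping), not just the general scheme you describe.
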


\bigskip

\noindent\textbf{Proof of Theorem \ref{T1}.} \\
Choose $\theta_{i}^{(n)}(p_i) \in [0, \frac{1}{2}]$ such that $2 \cos 2 \pi \theta_{i}^{(n)}(p_i)=a_{i}^{(n)}(p_i).$  Given any sub interval $I \subset [-2,2]$ choose sub interval 
$I^{'} \subset [0, \frac{1}{2}]$ so that $\theta_{i}^{(n)}(p_i) \in I^{'}$ if and only if $a_{i}^{(n)} \in I.$
By Theorem \ref{ET}, we have
\begin{align*}
\left|\sharp \right.\{1 \leq n  \leq (s(N,k)) &: (a_{1}^{(n)}(p_1),\ldots, a_{r}^{(n)}(p_r)) \in I  - V \mu(I)\left. \right|\\
&\ll \frac{s(N,k)r}{M+1}+\left |\prod_{i=1}^r \sum_{n=1}^{s(N,k)}2 \cos \ m_i \theta_{i}^{(n)}(p_i)-s(N,k) C_{\underline m}\right|.
\end{align*}
Using Proposition \ref{P1}, we get
$$\ll \frac{s(N,k)r}{M+1}
+(p_1^{\frac{3m_1}{2}} \cdots p_r^{\frac{3m_r}{2}})(m_1 \cdots m_r)\log \,(4(p_1^{m_1} \cdots p_r^{m_r}))\log \,M^r.$$
Since the main contribution is coming from $\left(p_1^{\frac{3m_1}{2}}\cdots p_r^{\frac{3m_r}{2}} \right)$,  by choosing $\displaystyle M=\frac{\log \,kN}{ \log \,p_1p_2 \cdots p_r}$, we get the required result.   $\hfill\Box$

\bigskip

\noindent\textbf{Acknowledgments:}
The author would like to thank Prof$.$ M. Ram Murty and Dr$.$ Kaneenika Sinha for useful discussions in the earlier version of this manuscript.  I am  also thankful to Prof$.$ R. Thangadurai and Dr$.$ Jaban Meher for their suggestions to improve the presentation of the paper.

\end{document}